\newtheorem{defn}{Definition}
\newtheorem{conjectures}{Conjecture}
\newtheorem{facts}{Fact}
\title{Improved Upper Bounds on $a'(G\Box H)$}
\author{Punit Mehta$^{1}$, Rahul Muthu$^{1}$, Gaurav Patel$^{1}$,\\ Om Thakkar$^{2}$, Devanshi Vyas$^{1}$}
\institute{Dhirubhai Ambani Institute of Information \& Communication Technology, Gandhinagar, India$^{1}$, \\ The Pennsylvania State University, State College, USA$^{2}$ \\ \email{\{mehta\_punit,rahul\_muthu,201101065,201101141\}@daiict.ac.in,omthkkr@cse.psu.edu}}
\date{}
\begin{document}
\maketitle
\begin{abstract} The acyclic edge colouring problem is extensively studied in graph theory. The corner-stone of this field is a conjecture of Alon et. al.\cite{alonacyclic} that $a'(G)\le \Delta(G)+2$. In that and subsequent work, $a'(G)$ is typically bounded in terms of $\Delta(G)$. Motivated by this we introduce a term $gap(G)$ defined as $gap(G)=a'(G)-\Delta(G)$. Alon's conjecture can be rephrased as $gap(G)\le2$ for all graphs $G$. In \cite{manusccartprod} it was shown that $a'(G\Box H)\le a'(G)+a'(H)$, under some assumptions. Based on Alon's conjecture, we conjecture that $a'(G\Box H)\le a'(G)+\Delta(H)$ under the same assumptions, resulting in a strengthening. The results of \cite{alonacyclic} validate our conjecture for the class of graphs it considers. We prove our conjecture for a significant subclass of sub-cubic graphs and state some generic conditions under which our conjecture can be proved. We suggest how our technique can be potentially applied by future researchers to expand the class of graphs for which our conjecture holds. Our results improve the understanding of the relationship between Cartesian Product and acyclic chromatic index. 
\end{abstract}

{\bf keywords}: Acyclic edge colouring$\cdot$Acyclic chromatic index$\cdot$Cartesian Product $\cdot$Gap
\section{Introduction}
An acyclic edge colouring of a graph is an assignment of colours to
its edges such that adjacent edges get distinct colours and the edge
set of any cycle uses at least three distinct colours. This is a widely
studied variant of graph colouring. The acyclic chromatic index of a
graph, denoted $a'(G)$ is the least number of colours used in any
acyclic edge colouring of the graph. It was conjectured by Alon,
Sudakov and Zaks \cite{alonacyclic} that for all graphs
$a'(G)\le\Delta(G)+2$, where $\Delta(G)$ represents the maximum
degree. There have been many results on the acyclic edge colouring
problem, but the problem is rich enough to retain many difficult and
interesting unsolved variants. Some of the works in this field are \cite{iplacycedcolpap},\cite{nearopti},\cite{gridlike}.

In this paper we give some results which show the effect of cartesian product on the acyclic chromatic index of the resulting graph in terms of the individual factor graphs. The results in this paper build on \cite{manusccartprod} and \cite{gridlike}. We define a new vertex chromatic index and show how bounding its value can be used to extend the results of the previous papers to a wider class of graphs. Our results (and more importantly our plausible looking conjecture) are very significant because they extend the understanding of the long standing problem of acyclic edge colouring in the well known and wide framework of cartesian products of graphs. Our conjecture if correct and proved, will imply that Alon's Conjecture is true except possibly for prime graphs under the cartesian product operation. Looking at the history of research conducted in the field, we believe this is a significant stride forward.

In Section~\ref{secdef} we define some basic terms. In Section~\ref{relres} we present the most relevant results impacting our work. In Section~\ref{secconj} we give details of the ideas used in the proof of our conjecture assuming a lemma on a new type of vertex colouring. In Section~\ref{newchromatic} we define a new vertex chromatic number of graphs and use it to prove some cases of our conjecture. In Section~\ref{subcubic} we show that our conjecture is true for the cartesian product of a large class of graphs with a large subclass of subcubic graphs. In Section~\ref{secspecgrap} we give much tighter results for cartesian products of any graph with the Petersen Graph or $K_4$. Finally in Section~\ref{concl} we indicate some concrete future directions of research indicated by our work. 

\section{Definitions and Notation}\label{secdef}

We present here the operation of the cartesian product of graphs.
\begin{defn}
Given two graphs $G_1 = (V_1,E_1)$ and $G_2 = (V_2,E_2)$, the {\em
cartesian product} of $G_1$ and $G_2$, denoted $G_1 \Box G_2$, is
defined to be the graph $G=(V,E)$ where $V=V_1 \times V_2$ and $E$
contains the edge joining $(u_1,u_2)$ and $(v_1,v_2)$ if and only if
{\em either} $u_1=v_1$ and $(u_2,v_2) \in E_2$ {\em or} $u_2=v_2$ and
$(u_1,v_1) \in E_1$.
\end{defn}

The reader may verify that the maximum degrees of graphs under the
cartesian product satisfies the following relation. 

\begin{facts}\label{cartproddeg}
\[\Delta(G\Box H)=\Delta(G)+\Delta(H)\]
\end{facts}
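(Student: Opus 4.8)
The plan is to reduce the claim about maximum degrees to a pointwise statement about the degree of a single vertex, and then optimise. First I would fix an arbitrary vertex $(u_1,u_2)$ of $G\Box H$, with $u_1\in V(G)$ and $u_2\in V(H)$, and enumerate its neighbours directly from the definition of the Cartesian product. Every neighbour of $(u_1,u_2)$ differs from it in exactly one coordinate, so the neighbours split into two families: those of the form $(v_1,u_2)$ with $(u_1,v_1)\in E(G)$, and those of the form $(u_1,v_2)$ with $(u_2,v_2)\in E(H)$.

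The next step is to verify that these two families are disjoint and to count each. A vertex in the first family shares the second coordinate $u_2$ while having a first coordinate different from $u_1$, whereas a vertex in the second family shares the first coordinate $u_1$ while having a second coordinate different from $u_2$; hence no vertex can belong to both. The first family is in bijection with the neighbours of $u_1$ in $G$ and so has size $\deg_G(u_1)$, while the second is in bijection with the neighbours of $u_2$ in $H$ and has size $\deg_H(u_2)$. Combining these gives the pointwise identity
$$\deg_{G\Box H}(u_1,u_2)=\deg_G(u_1)+\deg_H(u_2).$$

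Finally I would maximise over all vertices of the product graph. Because the first coordinate ranges over $V(G)$ and the second over $V(H)$ \emph{independently}, the maximum of the sum decouples into the sum of the separate maxima:
$$\Delta(G\Box H)=\max_{u_1,u_2}\bigl(\deg_G(u_1)+\deg_H(u_2)\bigr)=\max_{u_1}\deg_G(u_1)+\max_{u_2}\deg_H(u_2)=\Delta(G)+\Delta(H),$$
which is the asserted relation. The statement is elementary, so there is no serious obstacle; the only point that genuinely requires care is this last decoupling of the maximum. It is valid precisely because the two coordinates may be chosen independently, so a maximum-degree vertex of $G$ can be paired with a maximum-degree vertex of $H$ to realise the bound, and no single vertex of the product can exceed it. The entire argument is thus a direct unwinding of the product definition followed by this observation about separating the optimisation.
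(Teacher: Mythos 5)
Your proof is correct; the paper itself gives no argument for this fact (it is stated as something ``the reader may verify''), and your pointwise degree computation $\deg_{G\Box H}(u_1,u_2)=\deg_G(u_1)+\deg_H(u_2)$ followed by decoupling the maximum over the two independent coordinates is exactly the standard verification the authors intend. No gaps.
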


We now state Alon's conjecture from \cite{alonacyclic}

\begin{conjectures}\label{alonconj}
$a'(G)\le \Delta(G)+2, \forall G$.
\end{conjectures}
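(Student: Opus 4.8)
The plan is to attack the statement with the probabilistic method, since a direct deterministic argument achieving the tight additive bound $\Delta(G)+2$ seems hopeless for arbitrary $G$. First I would colour each edge independently and uniformly from a palette of $C\Delta(G)$ colours, for a constant $C$ to be optimised, and introduce two families of bad events: for each pair of adjacent edges, the event that they receive the same colour; and for each even cycle, the event that it is properly coloured using only two colours (an alternating bichromatic cycle). A colouring in which no bad event occurs is exactly an acyclic edge colouring. Each bad event is mutually independent of all events not sharing an edge with it, so the dependency neighbourhood is controlled by $\Delta(G)$, and summing the failure probabilities over all cycle lengths gives a convergent series; the Lov\'asz Local Lemma --- in the asymmetric form needed to handle the varying cycle lengths --- then certifies a valid colouring with $O(\Delta(G))$ colours.

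Second, to drive the constant $C$ downward I would replace the one-shot random colouring by an iterative recolouring scheme analysed through the entropy-compression (Moser--Tardos) framework: whenever a bad event appears, recolour the offending edges from a fresh random source, and argue that any excessively long execution would encode the random bits injectively into a shorter description, a contradiction. Refinements exploiting girth, so that short bichromatic cycles are the only genuine threat, together with careful bookkeeping of the colours forbidden at each edge, are what separate the successive improvements (from the original large multiple of $\Delta(G)$ down to a small constant multiple) in the literature.

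The hard part, and the reason this remains a conjecture rather than a theorem, is that every known probabilistic argument loses a multiplicative factor: the slack in the Local Lemma is inherently proportional to $\Delta(G)$, so none of these methods can reach an additive-constant bound in full generality. The honest assessment is therefore that a complete proof is not currently within reach, and the productive route is to establish the bound on structured families where the bad events are sparse enough to be eliminated one at a time --- sub-cubic graphs, bounded-degeneracy graphs, and in particular Cartesian products. I would expect the Cartesian-product machinery developed later in this paper to be the most promising lever, since a result of the form $a'(G\Box H)\le a'(G)+\Delta(H)$ would let one inherit the tight bound on the product directly from a tight bound on the factor $G$, reducing the conjecture to the case of graphs prime under $\Box$.
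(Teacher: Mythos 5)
The statement you were asked to prove is Conjecture~\ref{alonconj}, which is Alon, Sudakov and Zaks's acyclic edge colouring conjecture. The paper does not prove it --- it is quoted from \cite{alonacyclic} as a motivating open problem and remains open in general --- so there is no proof of the paper's to compare your attempt against. Your writeup correctly recognises this: what you have produced is an accurate survey of the known partial approaches (the Lov\'asz Local Lemma argument giving $a'(G)\le C\Delta(G)$ for a constant $C$, and the entropy-compression refinements that shrink $C$), together with a correct diagnosis of why they fall short, namely that the slack in these probabilistic arguments is multiplicative in $\Delta(G)$ and cannot be pushed to an additive constant. As a proof of the statement, however, it has the unavoidable gap that it establishes nothing: no step of your argument yields $a'(G)\le\Delta(G)+2$ for an arbitrary graph, and you say as much yourself.

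Two smaller remarks. First, your closing observation --- that a bound of the form $a'(G\Box H)\le a'(G)+\Delta(H)$ would let the tight bound on a factor be inherited by the product, reducing the conjecture to graphs that are prime under $\Box$ --- is exactly the reduction the authors advertise in their introduction; so you have correctly identified the role this conjecture plays in the paper, namely as an assumption and a target rather than a result. Second, if you were to flesh out the first-moment setup, the cycle events should range over even cycles of length at least four, and the dependency counting needs care: the number of cycles of length $2k$ through a fixed edge can grow like $\Delta(G)^{2k-2}$, so the relevant series converges only when the palette size is a sufficiently large multiple of $\Delta(G)$ --- which is precisely the multiplicative loss you identify as the obstruction.
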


Since the acyclic chromatic index $a'(G)$ is usually bounded in terms of $\Delta(G)$, we introduce the following useful concept. We define the $gap(G)$ of a graph $G$ as the difference between its acyclic chromatic index and its maximum degree. Thus, \[gap(G)=a'(G)-\Delta(G)\]
Thus the conjecture of Alon et. al. is essentially that: \[\forall G, gap(G)\le 2\]

We use (a,b) to denote a nontrivial maximal bichromatic path using the colours $a$ and $b$ on its edges. We use XY to denote an edge between vertices X and Y. These notations are used for the proof mentioned in Section \ref{subcubic}.

\section{Related results}\label{relres}

The main result of \cite{gridlike}, due to Muthu, Narayanan and Subramanian, giving optimal acyclic edge colourings of grid like graphs (graphs
obtained by taking the cartesian product of an arbitrary number of
graphs each of which is either a single edge, a longer path or a
cycle) follows from the following general theorem also derived in that same paper. Here we only state this lemma. For the consequent results (which are tight for most of the cases and off by
at most one in a few remaining cases) as well as the proof of that lemma the reader is advised to refer to that paper.

\begin{theorem}\label{thmgrid} Let G be a simple graph with $a'(G)=\eta$. Then,
\begin{enumerate}
\item  $a'(G\Box P_2)\le \eta+1$, if $\eta\ge2$.
\item  $a'(G\Box P_l)\le\eta+2$, if $\eta\ge2$ and $l\ge3$. 
\item  $a'(G\Box C_l)\le \eta+2$, if $\eta>2$ and $l\ge3$.
\end{enumerate}
\end{theorem}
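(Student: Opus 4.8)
The plan is to build the colouring constructively, starting from a fixed optimal acyclic edge colouring $c$ of $G$ with colour set $\{1,\dots,\eta\}$, reusing it on each copy of $G$ sitting inside the product, and giving the matching (``vertical'') edges between consecutive copies the one or two fresh colours. Throughout I write $G_0,G_1,\dots$ for the successive copies of $G$, and I call the \emph{fibre} over a vertex $v$ of $G$ the set of product vertices projecting to $v$.

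For Part (1), $G\Box P_2$ is two copies $G_0,G_1$ joined by a perfect matching. I would colour $G_0$ by $c$ and $G_1$ by $\pi\circ c$, where $\pi$ is a fixed-point-free permutation of $\{1,\dots,\eta\}$ (which exists precisely because $\eta\ge2$), and give every matching edge the single new colour $\eta+1$. Properness is immediate, since the matching colour lies outside the base palette and each copy is properly coloured. The only bichromatic cycles to rule out are those alternating colour $\eta+1$ with a base colour $d$: projecting such a cycle to $G$ yields a closed alternating walk whose $G_0$-edges have $c$-colour $d$ and whose $G_1$-edges have $c$-colour $\pi^{-1}(d)$, and these two colours are distinct because $\pi$ has no fixed point. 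Hence the projection lies inside a single $(d,\pi^{-1}(d))$-bichromatic subgraph of $G$, which is a disjoint union of paths since $c$ is acyclic; I would then show that the alternation forced by the vertical edges cannot close up inside a union of paths. The dangerous four-cycles are killed directly by $c(uv)\ne\pi(c(uv))$ for every edge $uv$.

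For Part (2), $G\Box P_l$ is a chain $G_0,\dots,G_{l-1}$ with consecutive copies matched. I would colour $G_i$ by $c$ for even $i$ and by $\pi\circ c$ for odd $i$, and colour the matching between $G_i$ and $G_{i+1}$ with $\eta+1$ when $i$ is even and $\eta+2$ when $i$ is odd. The key structural fact is that the fibre over any vertex is a path $P_l$, so the two new colours colour the fibres acyclically and an $(\eta+1,\eta+2)$-bichromatic subgraph, being confined to these fibre paths, contains no cycle. A cycle mixing a base colour with a single matching colour is confined to one pair of consecutive copies sharing that matching colour, so it reduces to the analysis of Part (1); a cycle in two base colours lives inside one copy, where the colouring is a relabelling of the acyclic $c$. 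These cases together give the bound.

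The hard part will be Part (3). In $G\Box C_l$ the fibre over a vertex is now a \emph{cycle} $C_l$, so two matching colours can no longer colour the fibres acyclically: an even fibre cycle coloured alternately becomes a bichromatic cycle, and for odd $l$ the alternating pattern even fails properness at the wrap-around seam between $G_{l-1}$ and $G_0$. The resolution must reuse base colours on a few matching edges so as to break every winding cycle while preserving properness, and this is exactly where $\eta>2$ is needed: with at least three base colours, each vertex has a free colour with which a seam edge can be recoloured without clashing with its $\deg_G(v)\le\eta$ base edges or its incident vertical edges. I expect the main technical effort to lie in choosing these seam colours globally so that no bichromatic cycle survives---neither one winding around a fibre nor one winding around the cylinder while alternating a matching colour with a base colour---and in treating the even and odd $l$ seams separately.
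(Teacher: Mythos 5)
This theorem is not proved in the paper you are working from: it is imported verbatim from the cited grid-like-graphs paper, and the present text explicitly defers the proof there, only sketching the underlying technique (isomorphic colourings of the copies of $G$ related by nonfixing permutations, with fresh colours on the crossing edges). Your Parts (1) and (2) are correct and are exactly that technique: a fixed-point-free permutation $\pi$ of the $\eta$ colour classes is precisely a nonfixing permutation, your projection argument for a cycle alternating $\eta+1$ with a base colour $d$ does close up properly (each vertex of the product cycle forces its matching edge into the cycle, so the projection is a genuine cycle of $G$ coloured from the two distinct colours $\{d,\pi^{-1}(d)\}$, contradicting acyclicity of $c$), and your confinement of a single-matching-colour cycle to one consecutive pair of copies in the $P_l$ case is sound because the other matching colour blocks passage to the next pair.

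Part (3), however, is only a plan, and the one concrete mechanism you propose for it does not work as stated. You claim that because $\eta>2$ ``each vertex has a free colour with which a seam edge can be recoloured without clashing with its $\deg_G(v)\le\eta$ base edges.'' But $\deg_G(v)$ can equal $\eta$ (indeed $\Delta(G)\le a'(G)=\eta$ is the only a priori bound, with equality whenever $gap(G)=0$), in which case every base colour is already present at $v$ inside its copy and no base colour is free for the seam edge at $v$; worse, if the two copies flanking the seam carry different permutations, up to $2\deg_G(v)$ base colours are forbidden there. So the hypothesis $\eta>2$ cannot be cashed in the way you suggest, and the genuinely hard case --- breaking both the fibre cycles and the cylinder-winding bichromatic cycles simultaneously --- remains open in your write-up. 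To complete Part (3) you need a global scheme, e.g.\ assigning more than two pairwise nonfixing permutations around the cycle of copies together with a careful placement of the two new colours on the matchings, rather than a local ``free colour at each vertex'' repair; for the actual construction you should consult the cited source, since this paper does not reproduce it.
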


The following result \cite{manusccartprod} due to Muthu and Subramanian is weaker, but has a broader scope.
\begin{theorem}\label{cartprod}
Let $G=(V_G,E_G)$ and $H=(V_H,E_H)$ be two connected nontrivial
graphs, such that $max\{a'(G),a'(H)\}>1$. Then, \[a'(G\Box H)\le a'(G)+a'(H)\] 
\end{theorem}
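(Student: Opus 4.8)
The plan is to build an acyclic edge colouring of $G\Box H$ directly from fixed acyclic colourings $c_G$ and $c_H$ of the factors, using two disjoint palettes $A$ and $B$ with $|A|=a'(G)$ and $|B|=a'(H)$. Every edge of $G\Box H$ is either a \emph{$G$-edge} (living in a single copy of $G$, indexed by a vertex $v\in V_H$) or an \emph{$H$-edge} (living in a single copy of $H$). The first attempt is to colour each $H$-edge by its $c_H$-colour and each $G$-edge by its $c_G$-colour. Properness is immediate: at a vertex $(u,v)$ the incident $G$-edges receive distinct $A$-colours, the incident $H$-edges distinct $B$-colours, and $A\cap B=\emptyset$. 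The obstruction is acyclicity: for any edge $uu'\in E_G$ and any edge $vv'\in E_H$ the four vertices $(u,v),(u',v),(u',v'),(u,v')$ span a $4$-cycle whose two $G$-edges share a colour and whose two $H$-edges share a colour, i.e. a bichromatic square. This is exactly why the hypothesis $\max\{a'(G),a'(H)\}>1$ is needed: for $K_2\Box K_2=C_4$ the naive bound $1+1$ is beaten by $a'(C_4)=3$.

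To repair this I would perturb the colouring layer by layer. Assign to each $v\in V_H$ a permutation $\pi_v$ of the palette $A$ and colour a $G$-edge $e$ sitting in the copy indexed by $v$ with $\pi_v(c_G(e))$; the $H$-edges keep their $c_H$-colours. Each copy of $G$ is still properly and acyclically coloured, since applying a permutation to an acyclic colouring preserves both properties, and properness across the two edge types is unaffected. It remains to control bichromatic cycles. A cycle using only $G$-edges lies in one copy of $G$ and a cycle using only $H$-edges lies in one copy of $H$, so both are handled by $c_G$ and $c_H$. A \emph{mixed} bichromatic cycle must use exactly one colour $a\in A$ and one colour $b\in B$, hence alternates $G$-edge, $H$-edge, $G$-edge, $\dots$; since the $b$-coloured edges of $H$ form a matching, the $H$-coordinate can only oscillate between the two endpoints $v_0,v_1$ of a single $b$-edge, so the whole cycle lives in the prism over that edge.

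The decisive step is then a reduction to the acyclicity of $c_G$. In the prism over $v_0v_1$, the $a$-coloured $G$-edges in layer $v_0$ form the $c_G$-class $\pi_{v_0}^{-1}(a)$ and those in layer $v_1$ form the class $\pi_{v_1}^{-1}(a)$. If these two classes are distinct, then the projection of the mixed cycle to $G$ is a cycle alternating between two distinct colour classes of $c_G$, that is, a bichromatic cycle of $c_G$, which cannot exist. Thus it suffices to guarantee $\pi_{v_0}^{-1}(a)\ne\pi_{v_1}^{-1}(a)$ for every $a\in A$ and every edge $v_0v_1\in E_H$; equivalently, adjacent layers should receive \emph{discordant} permutations, differing in every coordinate. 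So the construction succeeds as soon as we can assign to the vertices of $H$ permutations of $A$ that are pairwise discordant across edges, i.e. a homomorphism of $H$ into the ``discordance graph'' on the symmetric group of $A$.

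The hard part will be the existence of such a discordant assignment in full generality. Two discordant permutations always exist once $|A|\ge 2$ (the identity and a fixed-point-free permutation), which settles the case of bipartite $H$; and the cyclic shifts of a full $|A|$-cycle supply $|A|$ pairwise-discordant permutations, covering every $H$ with $\chi(H)\le a'(G)$. The delicate situation is a non-bipartite factor paired with a small palette, for instance an odd cycle against a factor with $a'=2$; here I would exploit the symmetry of the statement and instead permute the copies of $H$ using the $B$-palette, so that one only needs a discordant assignment on whichever factor has the larger acyclic chromatic index relative to the other's chromatic number. Verifying that one of the two options is always available under the stated hypotheses, and in the borderline cases squeezing the assignment out of the connectivity of the factors, is the main obstacle I anticipate.
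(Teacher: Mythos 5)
Your construction is the same one the paper outlines (and attributes to \cite{manusccartprod}): disjoint palettes, $H$-edges coloured by $c_H$, each copy of $G$ coloured by a permutation of $c_G$, with adjacent copies receiving what the paper calls \emph{nonfixing} and you call \emph{discordant} permutations; your localisation of a mixed bichromatic cycle to the prism over a single $b$-edge of $H$ (because each colour class of $c_H$ is a matching) is exactly the right reduction. The one step you leave as an anticipated obstacle is precisely where the paper's argument closes: take the factor with the larger acyclic chromatic index, say $a'(G)\ge a'(H)$, and assign the $a'(G)$ cyclic shifts of the palette $A$ to the copies of $G$ according to an optimal proper vertex colouring of $H$; this needs only $\chi(H)$ pairwise discordant permutations, and $\chi(H)\le a'(H)\le a'(G)$ always holds, since by Brooks' theorem $\chi(H)\le\Delta(H)\le a'(H)$ unless $H$ is complete or an odd cycle, and in those (regular) cases $a'(H)\ge\Delta(H)+1=\chi(H)$ because with only $\Delta$ colours every colour class of a regular graph would be a perfect matching and any two classes would form a bichromatic cycle. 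So no delicate case analysis remains, and the hypothesis $\max\{a'(G),a'(H)\}>1$ is used only to guarantee at least two shifts exist, as you observed with $C_4$. (One remark: the paper's outline also flags pairs of copies at the ends of maximal bichromatic paths of $H$; in the disjoint-palette scheme your analysis correctly shows only adjacent copies matter, and that extra condition becomes relevant only in the paper's later refinement where missing colours of $H$ are reused inside copies of $G$.)
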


The technique used to prove this can be outlined as follows. Initially we assume that we have optimal acyclic edge colourings of $G$ and $H$ available. It is known that the cartesian product graph can be viewed as $n(H)$ disjoint copies of $G$ each corresponding to a vertex of $H$, and a perfect matching between the corresponding vertices of two copies of $G$ which correspond to adjacent vertices of $H$. We refer to these two classes of edges as local edges and crossing edges respectively. (We also use the reference: edges of $G$ and edges of $H$, respectively).(See Fig.1)
 
\begin{figure}
\centering
\includegraphics[width = 50mm]{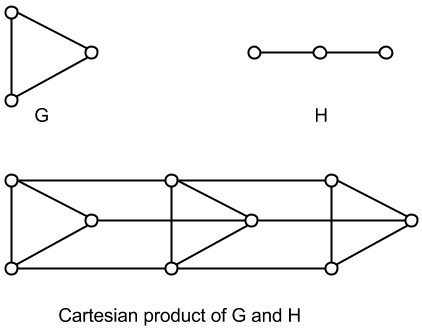}
\caption{}
\end{figure}

In our colouring scheme, the crossing edges between a pair of copies of $G$ all get the same colour as the corresponding single edge in an optimal acyclic edge colouring of $H$. The different copies of $G$ receive isomorphic colourings to each other, meaning that the partition of the edge set into colour classes are identical across the copies. However we use a set of such colourings, where the names of the colour classes are permuted with respect to each other. No two such permutations assigns the same colour for the edges corresponding to the same member of the partition. We call such permutations {\bf nonfixing permutations}. This underlying technique used in all cases of our proof here are closely related to the proof for cartesian product of arbitrary graphs with $P_2$ in \cite{gridlike}.  Clearly, this constrains the number of such permutations to be at most the number of colours in the optimal acyclic edge colouring of $G$. 

Since we begin with acyclic edge colourings of $G$ and $H$ respectively, using disjoint sets of colours, the overall colouring is clearly proper.  If there are any bichromatic cycles created after the colouring, they must use edges of $G$ and $H$ since each is individually acyclic. These are eliminated by using nonfixing permutations of the colouring of $G$ for any two copies of $G$ which would result in bichromatic cycles prior to this step. As is established later in this paper, pairs of copies of $G$ liable to bichromatic cycles prior to this step are either adjacent to each other or correspond to endpoints of maximal bichromatic paths in $H$.

\begin{figure}
    \centering
    \subfloat[Without using nonfixing permutations - bichromatic cycles are created]{{\includegraphics[width=90mm]{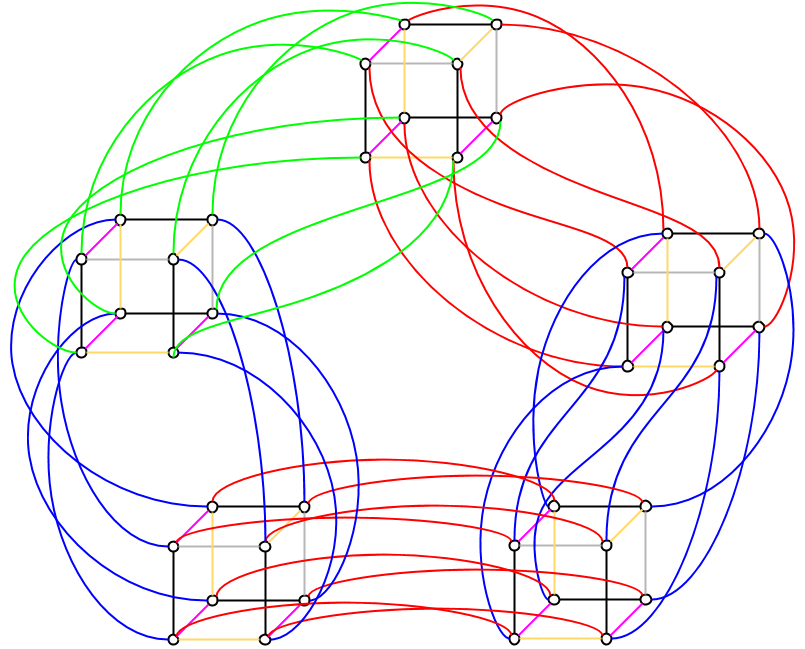} }}%
    \qquad
    \subfloat[Using nonfixing permutations]{{\includegraphics[width=90mm]{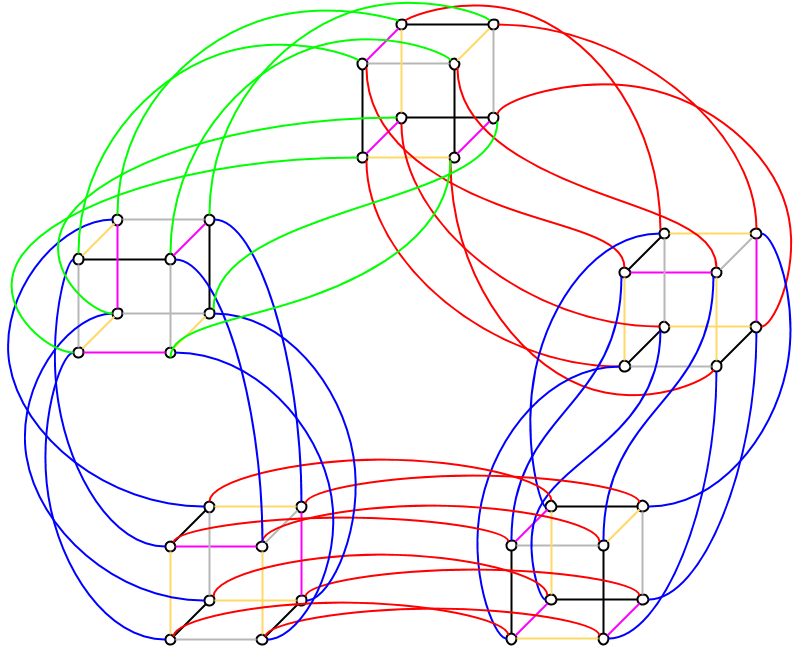} }}%
    \caption{}
    \label{fig:permutation}
\end{figure}


%

We assume that $a'(G)\ge a'(H)$. The idea is that $a'(H)\ge\Delta(H)\ge \chi(H)$ with the exception of $H$ being a complete graph or an odd cycle. In case $H$ is an odd cycle or a complete graph, it is well known that $a'(H)\ge \Delta(H)+1=\chi(H)$.  This enables us to colour the copies of $G$ according to the vertex colouring in an optimal vertex colouring of $H$ and give nonfixing permutations of the same basic colouring of $G$ to copies which receive distinct colours. Nonfixing permutations means that the two colourings are the same in terms of partition of the edge set into colour classes, but the set of edges receiving any particular colour are different from the set of edges receiving the same colour in the two copies. The importance (and in fact indispensability) of this idea can be seen from the following two figures ($2(a)$ \& $2(b)$), the first of which does not use nonfixing permutations while the second does. We see how the bichromatic cycles which are present in the first colouring are eliminated in the second while properness is retained without creating any new bichromatic cycles. The requirement $a'(G)\ge a'(H)$ is to ensure we have adequate nonfixing permutations. A set of $k$ elements cannot have more than $k$ mutually non-fixing permutations over it. 

Viewing Conjecture~\ref{alonconj} in conjunction with Theorem~\ref{cartprod} indicates that the latter result cannot be optimal when applied to pairs of graphs with total gap more than 2. Thus, we make the following conjecture.

\begin{conjectures}\label{ourconj}
Let $G$ and $H$ be two graphs with gap more than zero each. We, additionally, assume that they are connected and max$\{a'(G),a'(H)\}>1$. Then $a'(G\Box H)\le a'(G)+\Delta(H)$. Note that this result may hold by exchanging the reference of $G$ and $H$ if necessary.
\end{conjectures}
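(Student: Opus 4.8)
The plan is to retain the copy-and-permutation framework behind Theorem~\ref{cartprod} and squeeze the bound from $a'(G)+a'(H)$ down to $a'(G)+\Delta(H)$ by \emph{recycling} the surplus colours of $H$. Assume without loss of generality that $a'(G)\ge a'(H)$, fix an optimal acyclic edge colouring of $G$ on the local edges (its palette $\mathcal{P}_G$ has $a'(G)$ colours) and an optimal acyclic edge colouring of $H$ on the crossing edges, where the crossing edges between two adjacent copies form a monochromatic matching carrying the colour of the corresponding edge of $H$. Exactly $gap(H)=a'(H)-\Delta(H)$ of the colours used on the crossing edges are ``extra''; I would draw these $gap(H)$ colours from $\mathcal{P}_G$ rather than using fresh ones, so the crossing edges introduce only $\Delta(H)$ new colours and the total is the target $a'(G)+\Delta(H)$. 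The hypotheses $gap(G),gap(H)>0$ together with $a'(G)\ge a'(H)\ge\Delta(H)\ge\chi(H)$ (with the complete-graph and odd-cycle adjustment noted above) guarantee that $\mathcal{P}_G$ is large enough both to host the recycled colours and to supply the permutations introduced next.

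Next I would give each copy of $G$ a permutation of $\mathcal{P}_G$, keeping the partition into colour classes fixed but relabelling the classes, and then classify the bichromatic cycles that could survive. A cycle using two colours of $\mathcal{P}_G$ stays inside a single copy of $G$ and is excluded by the acyclicity of $G$; a cycle using two genuine colours of $H$ stays inside a single copy of $H$ and is excluded by the acyclicity of $H$. The remaining cycles are mixed and split into two sorts: an ordinary mixed cycle (one colour from each palette) oscillates between two copies that are adjacent in $H$, since the crossing edges of a fixed colour form a matching; while recycling a colour $r$ creates a new sort, in which a maximal bichromatic $(r,s)$-path of $H$ is closed through two local $r$-edges into a cycle spanning the two copies indexed by the endpoints of that path. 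Thus the only pairs of copies that can share a bichromatic cycle are those adjacent in $H$ and those sitting at the endpoints of a maximal bichromatic path of $H$, matching the two families anticipated in the discussion of Theorem~\ref{cartprod}; assigning mutually nonfixing permutations to every such pair destroys all these cycles, and the same freedom can be used to seat the recycled colours without violating properness.

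This turns the whole argument into a purely vertex-colouring question on $H$: I need a colouring of $V(H)$ that is proper in the ordinary sense \emph{and} separates the two endpoints of every maximal bichromatic path of the fixed edge colouring, using at most $a'(G)$ colours, so that differently coloured vertices can be given distinct (hence nonfixing) permutations. This is precisely the ``new vertex colouring'' and its chromatic parameter introduced in Section~\ref{newchromatic}, and the conjecture follows, via the case analysis of Section~\ref{secconj}, once this parameter is shown to be at most $a'(G)$.

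The main obstacle is exactly this last bound. The adjacency constraints alone cost only $\chi(H)\le\Delta(H)\le a'(G)$ colours, but the endpoint constraints are global: two vertices far apart in $H$ can be tied together by a single long bichromatic path, so the associated conflict graph need not be sparse and its chromatic number is not obviously controlled by $a'(G)$. I would attack this by choosing the acyclic edge colouring of $H$ so as to keep its bichromatic paths short and their endpoint pattern tame, and then bound the chromatic number of the resulting conflict graph directly. Because such control seems available only for structured factors, I expect the argument to go through cleanly for restricted $H$ (for instance subcubic $H$, or $K_4$ and the Petersen graph, as in Sections~\ref{subcubic} and~\ref{secspecgrap}), while the general bound on the new chromatic parameter remains the crux that keeps the statement a conjecture.
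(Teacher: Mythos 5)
Your proposal reconstructs essentially the same strategy the paper itself follows: the copy-and-permutation framework of Theorem~\ref{cartprod}, the reduction to a vertex-colouring problem on the auxiliary graph $aux(H)$ (the parameter $\chi_{\nu}$ of Section~\ref{newchromatic}), and the correct identification that the statement remains a conjecture because $\chi_{\nu}(H)\le a'(G)$ is not known in general. One point where you are more optimistic than the paper: you assert that ``the same freedom can be used to seat the recycled colours without violating properness,'' i.e.\ that rewriting $gap(H)$ local colour classes with the colours of $H$ missing at each copy is routine. The paper explicitly concedes that this step is only clear when $gap(H)=1$ and leaves the case $gap(H)\ge 2$ open (one must check that swapping two or more colour classes simultaneously in each copy does not itself create new bichromatic cycles between copies, and the nonfixing-permutation argument does not automatically cover cycles involving two recycled colours). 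So your sketch has the same two open gaps as the paper --- the bound on $\chi_{\nu}$ and the multi-colour recycling step --- but only names the first.
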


Since, a major class of graphs for which we prove our conjecture in this paper are subcubic graphs, we state here two important earlier results obtained for the acyclic edge colouring of subcubic graphs, by San Skulrattankulchai \cite{iplacycedcolpap}; and Manu \& Chandran\cite{basavaraju2008acyclic} respectively. 
\begin{theorem}
$a'(G)\le5$ for all graphs with $\Delta(G)\le3$.
\end{theorem}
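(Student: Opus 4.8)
The plan is to prove the bound $a'(G)\le 5$ by induction on the number of edges, exploiting the characterization that an edge colouring is acyclic precisely when it is proper and contains no bichromatic cycle. I would pick an edge $uv$ of $G$, apply the inductive hypothesis to obtain an acyclic $5$-edge-colouring of $G-uv$, and then argue that this colouring extends to $uv$. Since $\Delta(G)\le 3$, the vertex $u$ carries at most two other coloured edges and likewise for $v$, so at most four colours are forbidden by properness and at least one colour from $\{1,\dots,5\}$ is \emph{properly available} for $uv$.

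The heart of the argument is to choose, among the properly available colours, one whose assignment to $uv$ creates no bichromatic cycle. The key observation is that colouring $uv$ with a properly available colour $c$ can only close a bichromatic cycle in colours $c$ and $d$ if there is already a maximal bichromatic $(c,d)$ path joining $u$ and $v$ in $G-uv$; and because $c$ appears at neither $u$ nor $v$, such a path must leave $u$ and arrive at $v$ along edges coloured $d$. Hence the only colours $d$ that can be dangerous for a given $c$ are those appearing \emph{simultaneously} at $u$ and at $v$. When the colour sets at $u$ and $v$ are disjoint there is no dangerous $d$ at all, and the single properly available colour completes the colouring immediately; this already disposes of the most constrained configuration.

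The main obstacle is the remaining case, where one or two colours are common to $u$ and $v$ and the few properly available colours are each blocked by an existing bichromatic path. I would resolve this by a colour-exchange argument: along a suitable maximal bichromatic path anchored at one endpoint, swap the two colours of the path so as to free a colour at $uv$ without introducing a new bichromatic cycle, using the degree bound to guarantee that the swap terminates cleanly and that the disturbed endpoint regains a usable colour. Since a non-regular subcubic graph always contains a vertex of degree at most two, giving extra slack in the count above, the delicate balancing is only genuinely needed for $3$-regular graphs, where I expect the interplay of the bichromatic-path case analysis and the swap to be the most technical part; the tightness of the bound is witnessed by $K_4$, for which five colours are in fact necessary.
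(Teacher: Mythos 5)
The paper does not prove this statement at all: it is quoted as a known result of Skulrattanakulchai \cite{iplacycedcolpap}, so your attempt can only be judged on its own merits, and as it stands it has a genuine gap. Your setup is sound --- induction on edges, the count showing at least $5-|C(u)\cup C(v)|\ge 1+|C(u)\cap C(v)|$ properly available colours for $uv$, and the observation that a colour $c$ absent from both $u$ and $v$ can only close a $(c,d)$ cycle when $d$ appears at both $u$ and $v$ --- and your tightness remark is correct, since $a'(K_4)=5$. But counting alone does not finish even the enumeration of cases: if $|C(u)\cap C(v)|=1$ there are two available colours and both can be blocked (the two $(c_i,d)$ paths share the $d$-edge at $u$ and can both terminate at the $d$-edge at $v$), and if $C(u)=C(v)$ there are three available colours and up to four blocking paths. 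So the "main obstacle" you defer to a colour-exchange is not a residual technicality; it is the entire content of the theorem.

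The specific step that is not a proof is: ``along a suitable maximal bichromatic path anchored at one endpoint, swap the two colours of the path so as to free a colour at $uv$ without introducing a new bichromatic cycle.'' Swapping colours $c$ and $d$ along a maximal $(c,d)$ path is in general illegal here: it changes which colours are present at the two endpoints of that path and at no interior vertex, but the freed colour at $u$ may now coincide with a colour at $v$, the far endpoint of the path may acquire a colour conflict with its other incident edges is avoided only because the path is maximal, and --- most importantly --- the swap can create \emph{new} bichromatic cycles in colour pairs $(c,e)$ or $(d,e)$ passing through interior vertices of the swapped path. Guaranteeing that some swap avoids all of these simultaneously, under the degree-$3$ constraint, is precisely the multi-page case analysis in \cite{iplacycedcolpap}. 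To close the gap you would need to state exactly which path you swap in each blocked configuration, verify properness at its endpoints, and verify that no new bichromatic cycle arises; none of that is present in the proposal.
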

 This result was improved by  to:
 \begin{theorem}
 If $G$ is not 3-regular, then $a'(G)\le 4$ for graphs with $\Delta(G)\le3$.
\end{theorem}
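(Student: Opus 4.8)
The plan is to argue by induction on the number of edges, equivalently to take a counterexample $G$ with fewest edges: a connected graph with $\Delta(G)\le 3$ that is not $3$-regular yet has $a'(G)\ge 5$. (Components may be treated separately, and by the preceding theorem of Skulrattankulchai such a $G$ in fact has $a'(G)=5$.) I would first remove all vertices of degree at most $1$: a pendant edge lies on no cycle, so given a $4$-colouring of the smaller graph --- which is still subcubic and still not $3$-regular --- it extends to the pendant edge using any of the $\ge 2$ colours left free at its non-leaf endpoint. Hence I may assume $\delta(G)\ge 2$, and, since $G$ is not $3$-regular, that it contains a vertex $v$ of degree exactly $2$.

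Let $v$ have neighbours $u$ and $w$. I would delete $uv$, colour $G-uv$ with four colours by minimality, and try to extend to $uv$. The colours forbidden at $uv$ by properness are those on the other edges at $u$ together with the colour of $vw$. If the colour of $vw$ does \emph{not} occur at $u$, then every colour left free is in fact safe: a bichromatic cycle through $uv$ must leave $v$ along $vw$, which forces the cycle's second colour to equal that of $vw$ and then demands an edge of that colour at $u$ --- impossible --- so one of the $\ge 1$ free colours completes the colouring. If that colour occurs at $u$ but $\deg(u)=2$, a spare candidate still survives, since a blocked candidate forces a maximal bichromatic path to be continued at $w$, and $w$ can continue at most two such paths while three candidates are free. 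The only genuinely tight case is therefore $\deg(u)=3$, with its two remaining edges coloured $a,b$ and $vw$ coloured $a$, leaving exactly two candidates $c,d$ for $uv$.

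In this tight case a candidate $c$ is unavailable exactly when $u$ and $v$ are the two endpoints of one maximal bichromatic path $(a,c)$, and likewise for $d$. Supposing both $c$ and $d$ are blocked, the paths $(a,c)$ and $(a,d)$ both leave $u$ along its $a$-coloured edge and both enter $v$ along $vw$; continuing them one step at each end shows that the neighbour of $u$ along its $a$-coloured edge and the vertex $w$ must each carry the full palette $\{a,c,d\}$ and so have degree $3$. This forced rigidity is what I would exploit: with a full palette at $w$ I can recolour $vw$ to its missing colour, or perform a Kempe-type swap along one of the two bichromatic paths, and thereby free a colour for $uv$ while preserving properness and acyclicity.

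I expect this final recolouring step to be the main obstacle. A single swap that destroys one obstruction can regenerate one of the same shape --- now forcing the neighbour of $u$ along its $b$-coloured edge to acquire a full palette --- so the recolourings must be organised into a controlled sequence whose properness and acyclicity are checked at every stage. Turning the accumulated rigidity (full palettes and degree $3$ forced on ever more neighbours) into a contradiction with the minimality of $G$, across the sub-cases indexed by the degrees of $u$ and $w$, their common neighbours, and which colour $vw$ shares with $u$, is where essentially all of the effort goes; everything before it is routine.
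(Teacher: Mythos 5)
This theorem is not proved in the paper at all: it is quoted verbatim as a prior result of Basavaraju and Chandran \cite{basavaraju2008acyclic}, so there is no in-paper argument to compare yours against. Judged on its own, your proposal is a reasonable reconstruction of the standard strategy (minimal counterexample, strip degree-one vertices, delete an edge at a degree-two vertex, extend by a local recolouring), and your preliminary reductions are sound: the pendant-edge extension, the observation that a candidate colour is unsafe only via an $(a,c)$ maximal bichromatic path between $u$ and $v$, the count showing $\deg(u)\le 2$ leaves a surviving candidate, and the forced full palettes $\{a,c,d\}$ at $w$ and at the $a$-neighbour of $u$ in the tight case are all correct.

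However, there is a genuine gap exactly where you say you expect one, and it is not a small finishing touch --- it is the entire theorem. In the tight case ($\deg(u)=3$, edges at $u$ coloured $a,b$, $vw$ coloured $a$, both candidates $c,d$ blocked), recolouring $vw$ to the missing colour $b$ is proper and creates no cycle (since $v$ has degree $2$ and $uv$ is uncoloured), but it reproduces an isomorphic obstruction: $w$ now carries $\{b,c,d\}$, and $(b,c)$ and $(b,d)$ paths from $v$ through $w$ can again both terminate at $u$ via its $b$-edge, blocking both candidates once more. Your proposal offers no termination argument for the ``controlled sequence'' of swaps, no invariant that strictly decreases, and no analysis of how a Kempe swap along one $(a,c)$ path can destroy the acyclicity or properness of the colouring elsewhere (Kempe chains in edge colourings with four colours on subcubic graphs need not be simple paths disjoint from the other obstruction). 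The published proof of this statement resolves precisely this by a long structured case analysis over the local configurations around $u$, $v$, $w$ and their neighbours; without supplying that analysis, what you have is a correct setup plus an accurate diagnosis of the difficulty, not a proof.
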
   

\section{Our Conjecture}\label{secconj}
Our conjecture, stated in the previous section, is an attempt to improve Theorem~\ref{cartprod}. Clearly if either $G$ or $H$ has $gap=0$, our conjecture is the same as Theorem~\ref{cartprod} and hence valid. In the case when each of them has $gap>0$, the idea we propose is to follow the colouring scheme used in the proof of  Theorem~\ref{cartprod} initially.

This gives rise to two exhaustive cases. The first is a bichromatic cycle formed between adjacent copies. This can be ruled out by using nonfixing permutations for the corresponding local colourings of $G$. See Figures $2(a)$ \& $2(b)$ for the colourings without and with nonfixing permutations respectively. Thus neighbouring copies must get different permutations of colourings. The other case is a bichromatic cycle formed using the edges of copies which correspond to endpoints of a maximal bicromatic path in the acyclic colouring of $H$. Thus copies connected by maximal bichromatic paths should be given relatively nonfixing permutations.

Both Theorem~\ref{cartprod} and Conjecture~\ref{ourconj} are validated for the classes of graphs considered in \cite{gridlike}. Consequently, we recast those results in a more general framework here. We state general conditions which help demonstrate the conjecture for a several families of graphs. We establish our conjecture for the cartesian product of a large class of graphs with a large subclass of subcubic graphs using the results of \cite{iplacycedcolpap} and \cite{basavaraju2008acyclic}. In particular, for the Petersen Graph, we prove our conjecture for its cartesian products with any possible graph. For $K_4$ we prove a slightly weaker result.

\section[conj]{A lemma useful for several cases of our conjecture}\label{newchromatic}

We now define a new type of vertex colouring and an associated new vertex chromatic number, $\chi_{\nu}$. Unlike the usual vertex colouring, our colouring is based on an auxiliary super graph of the given graph, over the same vertex set. We denote this auxiliary graph of $G$ by $aux(G)$. In order to compute this new vertex colouring, we add edges between vertices if they are the endpoints of a maximal bichromatic path in an optimal acyclic edge colouring. The new chromatic number is calculated by minimising this number over all possible optimal acyclic edge colourings of the graph. In the context of the preceding explanations it is clear that we want the copies of $G$ lying at endpoints of a maximal bichromatic path of $H$ to get distinct nonfixing permutations. Thus we need to give them distinct colours and towards this end we add extra edges between such pairs of vertices, provided they are not already adjacent in $G$. 

\begin{figure}
    \centering
    \includegraphics[width=90mm]{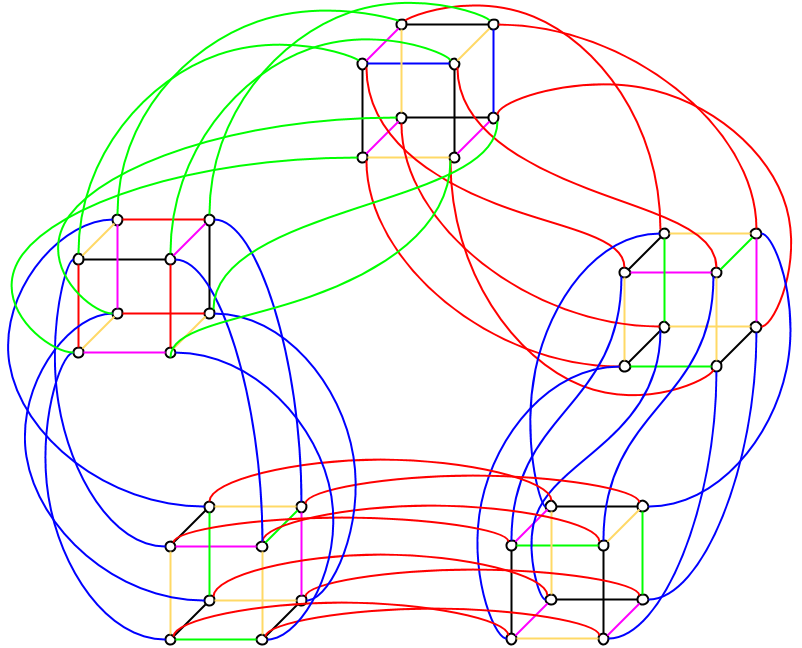}%
    \caption{Reusing the missing color}
    \label{fig:permutation}
\end{figure}

We state and prove a lemma which establishes our conjecture for the cartesian product of pairs of graphs $G$ and $H$, where $a'(G)\ge \chi_{\nu}(H)$.
\begin{lemma}
Let $G$ and $H$ be two connected graphs. Let their acyclic chromatic indices be $a'(G)$ and $a'(H)$ respectively. Now under an optimal acyclic edge colouring of $H$ (with, obviously, $a'(H)$ colours), let the new chromatic number $\chi_{\nu}$ be $k$. Then if $a'(G)\ge k$, then $a'(G\Box H)\le a'(G)+\Delta(H)$.
\end{lemma}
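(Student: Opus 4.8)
The plan is to construct an explicit acyclic edge colouring of $G\Box H$ using $a'(G)+\Delta(H)$ colours, following the scheme outlined in Section~\ref{relres}, and to show that the new chromatic number condition $a'(G)\ge k$ guarantees enough freedom to eliminate every bichromatic cycle. First I would fix an optimal acyclic edge colouring of $H$ that achieves $\chi_{\nu}(H)=k$, and build the auxiliary graph $aux(H)$ by adding an edge between every pair of vertices that are endpoints of a maximal bichromatic path in this colouring. By definition of $\chi_{\nu}$, the graph $aux(H)$ admits a proper vertex colouring with $k$ colours; I would fix such a colouring $c\colon V_H\to\{1,\dots,k\}$. The crossing edges will be coloured exactly as in the proof of Theorem~\ref{cartprod}: each matching between two adjacent copies of $G$ receives the single colour of the corresponding edge of $H$, drawn from a palette of $\Delta(H)$ colours disjoint from those used inside the copies.

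Next I would colour the local edges. Since $a'(G)\ge k$, I can select $k$ mutually nonfixing permutations of a fixed optimal acyclic edge colouring of $G$ (a set of $a'(G)$ colours admits at least $a'(G)\ge k$ mutually nonfixing permutations, as noted in the discussion after Theorem~\ref{cartprod}). I would then assign to the copy of $G$ indexed by a vertex $v\in V_H$ the permutation indexed by the colour $c(v)$. Because $c$ is a proper colouring of $aux(H)$, any two copies that are either adjacent in $H$ or joined by an edge of $aux(H)$ receive distinct, hence relatively nonfixing, permutations. The total number of colours is $a'(G)+\Delta(H)$ as required, and properness is immediate: local edges within a copy are properly coloured by construction, crossing edges incident to a common vertex carry distinct $H$-colours since $H$ is properly edge coloured, and local and crossing palettes are disjoint.

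The heart of the argument is verifying acyclicity, and this is where I expect the main obstacle. Any potential bichromatic cycle must alternate between local edges and crossing edges, since the colouring restricted to a single copy of $G$ and the colouring restricted to the crossing edges are each individually acyclic. A bichromatic cycle therefore uses exactly one local colour and exactly one crossing colour, forcing it to traverse precisely two copies of $G$ joined by paths of crossing edges; the two crossing segments then correspond either to a single edge of $H$ (adjacent copies) or to the two endpoints of a maximal bichromatic path in the colouring of $H$ (since a bichromatic cycle closing through two crossing colours would project to a bichromatic trail in $H$). I would argue that in either situation the two relevant copies are joined by an edge of $aux(H)$, so they carry relatively nonfixing permutations; the nonfixing property means the local colour class realising the cycle occupies different edge sets in the two copies, breaking the alternation and preventing the cycle from closing.

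The delicate point, and the step I would scrutinise most carefully, is the claim that every dangerous pair of copies is captured by an edge of $aux(H)$ and that relative nonfixing genuinely suffices to destroy the cycle rather than merely displacing it. I would handle this by a case analysis on how the hypothetical cycle projects onto $H$: the adjacent case reduces directly to the $G\Box P_2$ argument of~\cite{gridlike}, while the maximal bichromatic path case is exactly the configuration for which the auxiliary edges were introduced, so by the choice of $c$ those copies differ in permutation. I would then show that under a nonfixing permutation no single bichromatic colour pair can simultaneously complete the cycle in both copies, invoking that a nontrivial maximal bichromatic path cannot persist under a colour-class-disjoint relabelling. Assembling these cases yields that no bichromatic cycle survives, completing the construction and the bound $a'(G\Box H)\le a'(G)+\Delta(H)$.
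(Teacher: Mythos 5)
There is a genuine gap, and it is at the very centre of the lemma: your construction never actually achieves the colour count $a'(G)+\Delta(H)$. You colour the crossing edges "as in the proof of Theorem~\ref{cartprod}," i.e.\ each matching inherits the colour of the corresponding edge of $H$ in an optimal \emph{acyclic} edge colouring of $H$ --- but that colouring uses $a'(H)=\Delta(H)+gap(H)$ colours, not $\Delta(H)$. Your assertion that the crossing palette has only $\Delta(H)$ colours is unsupported and false precisely in the cases where the lemma says anything new (when $gap(H)>0$; if $gap(H)=0$ the statement is already Theorem~\ref{cartprod}). You cannot repair this by substituting a proper $\Delta(H)$-edge-colouring of $H$ either: such a colouring need not exist (Class~2 graphs), need not be acyclic, and the maximal bichromatic paths defining $aux(H)$ and $\chi_{\nu}(H)=k$ are taken with respect to an optimal acyclic colouring, so the whole auxiliary-graph mechanism would no longer apply. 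As written, your construction reproves the bound $a'(G)+a'(H)$ and stops there.

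The missing idea is the paper's colour-reuse step. After assigning nonfixing permutations via the proper $k$-colouring of $aux(H)$ (which you do correctly, and which is the part of your argument that matches the paper), the paper recolours, in each copy of $G$ corresponding to a vertex $v$ of $H$, the last $gap(H)$ local colour classes with the $gap(H)$ colours of the $H$-palette that are \emph{missing} at $v$; since those colours do not appear on any crossing edge incident to that copy, properness is preserved, and $gap(H)$ colours are eliminated from the global count, giving $a'(G)+a'(H)-gap(H)=a'(G)+\Delta(H)$. This step is also what makes the maximal-bichromatic-path edges of $aux(H)$ relevant: before reuse, a bichromatic cycle with one local and one crossing colour can only shuttle between two \emph{adjacent} copies (the crossing colour class is a matching in $H$), so the endpoints-of-a-maximal-bichromatic-path case that you invoke does not actually arise in your construction --- it arises only once local edges start carrying $H$-colours. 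Finally, be aware that even with this step supplied, the paper itself only claims the verification is clear for $gap(H)=1$ and explicitly leaves $gap(H)\ge 2$ open, so a complete proof would require more than either your proposal or the paper provides.
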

 This is because we will be able to give any two copies of $G$ which are either adjacent or endpoints of a maximal bichromatic path in $H$ distinct nonfixing permutations. Now, at each copy of $G$, the last $gap(H)$ colour classes of that copy of $G$ are rewritten with the $gap(H)$ missing colours at that copy. The proof is clear if $gap(H)=1$, but needs argument for larger values of $gap(H)$. Compare the colourings before and after reusing the missing colours (resulting in a reduction of one colour overall) in Figures $2(b)$ \& $3$ respectively. In fact we do not attempt to provide a proof of this lemma for larger values of $gap$ and leave that as an open problem for us or other researchers to pursue in future. 

Since our conjecture is complex we prove a relaxed version for some classes of graphs. In particular we assume that Alon's conjecture is true and thus limit ourselves to the graphs $G$ and $H$ each having gap 1 or 2. The way we prove the result in special cases, is to assume $H$ to be some fixed graph and with reference to this instance of $H$ we allow $G$ to vary in an unrestricted fashion. 

In order to apply the method based on our new chromatic number, it is necessary that $a'(G)$ is greater than or equal to $\chi_{\nu}(H)$. For the specific graphs we handle, we use special case analysis to handle the situation when $a'(G)$ is less than this threshold. This either involves interchanging the roles of $G$ and $H$ or some other direct methods.

\section{Cartesian Product with subcubic graphs $H$, with $\Delta(H)=3$ and $a'(H)\le4$}\label{subcubic}

\begin{theorem}
Consider a graph $H$ which has maximum degree $\Delta$=3 and $a'(H)=4$. Then,
\[ \chi_{\nu}(H) \leq 6\]
\end{theorem}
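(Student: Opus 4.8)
The plan is to fix an optimal acyclic edge colouring of $H$ with the $a'(H)=4$ colours, build the auxiliary graph $aux(H)$, bound its maximum degree, and then colour its vertices. First I would bound the degree of a vertex $v$ in $aux(H)$. Since the colouring is proper, the $d(v)\le 3$ edges at $v$ carry $d(v)$ distinct colours, so exactly $4-d(v)$ colours are missing at $v$. The key observation is that $v$ is an endpoint of a maximal bichromatic $(a,b)$-path precisely when exactly one of $a,b$ is present at $v$; the number of such colour pairs is therefore $d(v)\,(4-d(v))$. Consequently $v$ has at most $d(v)$ original neighbours together with $d(v)(4-d(v))$ bichromatic-path endpoints, for a total degree at most $d(v)\,(5-d(v))\le 6$, the maximum being attained at $d(v)\in\{2,3\}$. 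Hence $\Delta(aux(H))\le 6$.

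Second, I convert this into a chromatic bound. Greedy colouring only yields $\chi(aux(H))\le\Delta(aux(H))+1\le 7$, one more than desired, so I would instead appeal to Brooks' theorem. Any component of $aux(H)$ with maximum degree at most $5$ is $6$-colourable greedily. For a component attaining degree $6$, Brooks' theorem supplies a $6$-colouring unless that component is the complete graph $K_7$ (the odd-cycle exception cannot occur, as it requires maximum degree $2$). Hence it remains only to exclude $K_7$ components, after which $\chi_{\nu}(H)\le\chi(aux(H))\le 6$.

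Third, and this is where the main obstacle lies, I must rule out a $K_7$ component. A pure counting argument does not suffice: if such a $K_7$ had $t$ vertices of $H$-degree $3$ and $7-t$ of $H$-degree $2$ (the only degrees giving total degree $6$), then its original edges number $(t+14)/2$ and its bichromatic edges $14-t/2$, summing consistently to $\binom{7}{2}=21$, so no parity or edge-count contradiction arises. The extra leverage I would exploit is structural and comes from acyclicity: for each of the $\binom{4}{2}=6$ colour pairs the two-coloured subgraph is a disjoint union of paths, so the edges added for a fixed colour pair form a matching, and all bichromatic edges decompose into $6$ matchings while the original edges form $4$ colour classes. Inside a putative $K_7$ each matching contributes at most $3$ edges, and I would combine this decomposition with the detailed behaviour of the bichromatic paths leaving the candidate degree-$6$ vertices to force a contradiction. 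A complementary escape route, available because $\chi_{\nu}(H)$ is the minimum of $\chi(aux(H))$ over all optimal colourings, is to show that whenever a colouring produces a $K_7$ a local recolouring along an offending bichromatic path destroys it without raising the degree bound; making either of these arguments robust is the crux of the proof.
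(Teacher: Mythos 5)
Your first two steps match the paper's argument exactly: the degree of a vertex $v$ in $aux(H)$ is at most $d(v)+d(v)(4-d(v))=d(v)(5-d(v))\le 6$ (the paper tabulates this case by case for $d(v)\in\{1,2,3\}$), and Brooks' theorem then gives a $6$-colouring unless some component of $aux(H)$ is $K_7$. Your counting of endpoint colour pairs is in fact slightly cleaner than the paper's (which miscounts the degree-one case as $2$ rather than $3$, harmlessly).

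However, your third step is a genuine gap, and it is the crux: you only \emph{sketch} two candidate strategies for excluding a $K_7$ component (a matching decomposition of the bichromatic edges, or a local recolouring argument) and you concede yourself that neither is carried out. Without actually ruling out $K_7$ the theorem is not proved, since Brooks' theorem alone leaves that case open. The paper closes this gap with a concrete structural argument, stated as a separate theorem: if $aux(H)=K_7$ then $H$ has exactly $7$ vertices, a degree-one vertex can have aux-degree at most $4<6$, so every vertex has degree $2$ or $3$; since a $3$-regular graph on $7$ vertices cannot exist, there is a degree-two vertex $A$, and a three-way case analysis on the local neighbourhood of $A$ (whether $A$ lies in a triangle, whether its two neighbours share a common second neighbour, and the remaining configuration) shows in each case that some required maximal bichromatic path cannot exist, contradicting completeness of $aux(H)$. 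If you want to complete your proof you should either reproduce an argument of this kind or make one of your two sketched strategies precise; as written, neither the matching bound of $3$ edges per colour pair inside $K_7$ nor the recolouring idea is developed far enough to yield a contradiction.
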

\begin{proof}
Since the graph is subcubic, clearly no vertex has degree greater than 3. In this section we are dealing with the case of $a'(H)\le 4$. We use this to compute the number of colours present and absent at any vertex. For a vertex to be the endpoint of a maximal bichromatic path it must involve a colour present at the vertex and a colour absent at the vertex. Thus for degree three vertices, there can be at most three maximal bichromatic paths with it as an endpoint. For degree two vertices, the number of maximal bichromatic paths with it as endpoint can be at most 4. For a degree 1 vertex, the number of maximal bichromatic paths with it as an endpoint is at most 2. Thus in each case the sum of the vertex's degree in the graph and the number of maximal bichromatic paths for which it acts as an endpoint is at most 6. In other words the maximum degree of the auxiliary graph we had defined is at most 6. Hence, the graph $H$ has new chromatic number, $\chi_{\nu}(H)\le6$ unless the resulting auxiliary graph is the complete graph on 7 vertices (Brooks' Theorem~\cite{west}). We now establish why $aux(G)$ cannot be $K_7$.
\end{proof}

\begin{theorem}
Consider a non-regular graph G which has maximum degree $\Delta$=3. Then, $aux(G)$ can never be $K_7$.
\end{theorem}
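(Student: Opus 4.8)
The plan is to argue by contradiction: I assume $aux(G)=K_7$ and extract enough rigidity from the ``all $aux$-degrees equal $6$'' condition to make the configuration impossible. First I would record that $aux(G)=K_7$ forces $|V(G)|=7$ with every vertex at $aux$-degree exactly $6$. Writing $b(v)$ for the number of maximal bichromatic paths ending at $v$, the bound from the previous proof gives $d_{aux}(v)\le \deg_G(v)+b(v)$, so equality to $6$ forces both a maximal count of bichromatic paths at $v$ and that all of them end at \emph{distinct non-neighbours} of $v$. A degree-$1$ vertex has $d_{aux}(v)\le 3$, so $G$ has minimum degree at least $2$; since $G$ is non-regular with $\Delta=3$ it then has both degree-$2$ and degree-$3$ vertices, and the handshake identity forces the number of degree-$3$ vertices to be even. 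Hence the profile $(n_3,n_2)$ counting degree-$3$ and degree-$2$ vertices is one of $(2,5)$, $(4,3)$, $(6,1)$.

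Next I would turn tightness into colour conditions. At a degree-$3$ vertex $v$ with unique absent colour $\mu$, the bichromatic path of colours $(c,\mu)$ started along the edge of colour $c$ at $v$ is nontrivial only if the far end of that edge carries $\mu$; so $b(v)=3$ forces \emph{every} neighbour of $v$ to carry $\mu$. Likewise a degree-$2$ vertex attaining $b(v)=4$ forces both its neighbours to carry both of its absent colours, hence to have degree $3$. The clean reformulation I would use is: for each colour $\gamma$, the set $A_\gamma$ of vertices missing $\gamma$ is independent in $G$, so the $2e_\gamma$ vertices saturated by colour class $\gamma$ form a vertex cover, where $e_\gamma$ is the size of that class. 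A vertex cover has degree-sum at least $|E(G)|$ while $\Delta=3$, which yields $e_\gamma\ge 2$ for every colour; together with $\sum_\gamma e_\gamma=|E(G)|$ and $e_\gamma\le 3$ this pins down the colour-class sizes in each profile. The degree-$2$ condition also immediately kills $(2,5)$: each degree-$2$ vertex needs both neighbours among the two degree-$3$ vertices, forcing each degree-$3$ vertex to have degree at least $5$.

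The finish combines three contradictions, all feeding off acyclicity and the matching structure of the colour classes. If two colour classes of size $3$ saturate the same six vertices, their union is $2$-regular there and hence contains a bichromatic cycle, contradicting acyclicity; this applies in particular to the two colours absent at a degree-$2$ vertex. If instead the two colours \emph{present} at a degree-$2$ vertex both have size $3$, they span a Hamiltonian path of the whole graph, which is forced to contain that vertex as an interior point adjacent to both endpoints — impossible, since that would make the path have length two. The residual balanced distribution is settled by a parity count on one colour class: the ``missing colour'' conditions force two edges of a single colour to share an endpoint, contradicting that each colour class is a matching. These three devices dispatch the $(6,1)$ profile outright.

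I expect the $(4,3)$ profile to be the main obstacle, because its degree-$3$ part carries fewer forced incidences than $(6,1)$. My plan there is to exploit that each degree-$2$ vertex forces two degree-$3$ neighbours with complementary absent colours, which pins the candidates down to a tiny family (essentially one degree-$3$ vertex joined to all three degree-$2$ vertices, with the remaining degree-$3$ vertices forming a triangle) together with a forced colouring on each. For every candidate I would then trace the maximal bichromatic paths and show the twelve non-adjacent pairs cannot all arise as path endpoints — typically several bichromatic paths are forced to join \emph{adjacent} vertices or to repeat an endpoint pair, so some vertex falls short of $aux$-degree $6$. Verifying this last step across the candidates is the delicate part, and is where I would expect to spend most of the effort.
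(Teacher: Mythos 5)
Your global strategy is genuinely different from the paper's: the paper fixes a degree-$2$ vertex $A$ (after ruling out degree-$1$ vertices exactly as you do) and runs a local case analysis on whether $A$ lies in a triangle, whether its two neighbours share a common neighbour, etc., reading off a missing bichromatic path in each configuration; you instead enumerate the global degree profiles $(n_3,n_2)\in\{(2,5),(4,3),(6,1)\}$ and try to constrain the colour-class sizes. Several of your preparatory observations are correct and genuinely useful (the exclusion of degree-$1$ vertices, the parity of $n_3$, the fact that tightness of the $aux$-degree forces every neighbour of a vertex to carry that vertex's absent colours, hence that each set $A_\gamma$ is independent and each degree-$2$ vertex has two degree-$3$ neighbours, the elimination of $(2,5)$ by counting edge-endpoints). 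The last of these is essentially the same observation the paper uses to close its Case~3.

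However, the proposal as written is not a proof; it has two concrete gaps. First, and decisively, the $(4,3)$ profile is never dispatched: you describe a plan (pin down a ``tiny family'' of candidates and trace bichromatic paths in each) and explicitly defer the verification, but this profile is exactly where the paper's Cases~2 and~3 do their real work, so nothing can be omitted here. Second, the devices offered for the $(6,1)$ profile are not all sound. The claim that if the two colours \emph{present} at the degree-$2$ vertex $v$ both have class size $3$ then $v$ is ``adjacent to both endpoints'' of the resulting Hamiltonian path is unjustified: the endpoints of that path are the unique vertices missing each of the two colours, and nothing forces them to be the path-neighbours of $v$; indeed $v$ being an interior vertex of a $6$-edge bichromatic Hamiltonian path is perfectly consistent with $v$ having $aux$-degree $6$. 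Likewise the final ``parity count'' that is supposed to force two equally coloured edges to share an endpoint is asserted without any argument. Until the $(4,3)$ case is carried out in full and the $(6,1)$ devices are replaced by verifiable arguments, the statement remains unproved by this route.
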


\begin{proof}
Consider a case when $aux(G)$ is $K_7$ for an optimal acyclic edge colouring $c$ of $G$ and graph $G$ has a vertex of degree 1, say $u$. Such vertex can have degree atmost 3 in $aux(G)$ when $u$ is connected to a degree 3 vertex in $G$. Therefore, if $aux(G)$ is $K_7$, then $G$ can not have a vertex of degree 1. Since, graph $G$ has maximum degree $\Delta=3$ and it does not have a vertex of degree 1, it must have a vertex of degree 2, say $A$. 

\begin{case} $A$ is in a triangle (shown in Fig.4(a)) in $G$. 

Since, $A$ can not be an endpoint of all (1,3), (2,3), (1,4), (2,4) maximal 
bichromatic paths simultaneously, it can not have 6 neighbours in $aux(G)$. Therefore, degree-2 vertices can not be in a triangle in $G$.
\end{case}

\begin{case} $A$ is not in a triangle and its neighbours ($B$ and $C$) are connected by an intermediate vertex $E$ (shown in Fig.4(b)) in $G$. 

Since, $aux(G)$ is $K_7$, $G$ must have 7 vertices. Let $H$ be the $7^{th}$ vertex. Since, $A$ and $H$ can not be adjacent, there must be a maximal bichromatic path $P$ between them. Now, $H$ is an endpoint in $P$ and $H$ can not have neighbour $D$ (or $F$) in $P$ as it would remove the maximal bichromatic path between $A$ and $D$ (or $F$) in $G$. So, $E$ and $H$ must be adjacent in $P$ and hence, in $G$. Now, there can be either (1,4) or (2,3) maximal bichromatic path possible between $A$ and $H$. Considering this fact and without the loss of generality, assume that the edge $EH$ is coloured as 1. Note that, we can not have an edge $DF$ in $G$ as it would remove the maximal bichromatic path between $A$ and $D$ (or $F$) in $G$ because of the colour given to $DF$. Therefore, vertices $D$,$H$ and $F$,$H$ must be adjacent in $G$. $DH$ must be coloured 2 as colouring this edge with 4 would lead to the violation in the maximal bichromatic path between $A$ and $H$. $FH$ must be coloured 3 as it is the only free colour available on $H$ and $F$. 

Now, it can be seen that there can not be a maximal bichromatic path between the vertices $D$ and $F$. So, $aux(G)$ can not be $K_7$ in such a configuration.
\end{case}

\begin{case} $A$ is not in a triangle and its neighbours ($B$ and $C$) are not connected by an intermediate vertex (shown in Fig.4(c)) in $G$.

Note that we can not have edges $DH, EF. DE, HF$ in $G$ as argued in Case: 2. So, $G$ must have edges $DF$ and $EH$. Consider a vertex $D$ which has degree 2 and is connected to another degree 2 vertex. Therefore, $D$ can not be an endpoint of 4 maximal bichromatic paths simultaneously and hence, it can not have 6 neighbours in $aux(G)$. 

This completes the proof.
\end{case}
\end{proof}


\begin{figure}
    \centering
    \subfloat[Configuration for Case 1]{{\includegraphics[width=40mm]{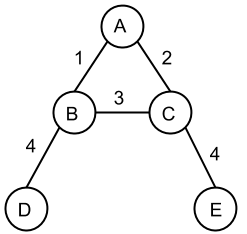} }}%
    \qquad
    \subfloat[Configuration for Case 2]{{\includegraphics[width=40mm]{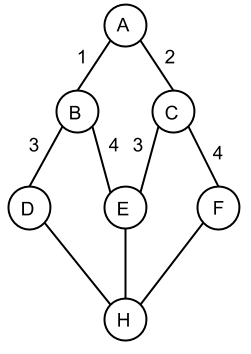} }}%
    \qquad
    \subfloat[Configuration for Case 3]{{\includegraphics[width=40mm]{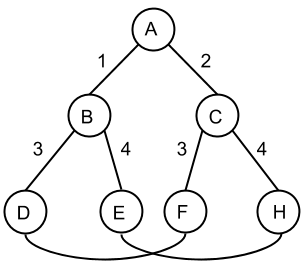} }}%
    \caption{}
    \label{fig:2and3}
\end{figure}

%
%
%
%

We conjecture the following statement on the new chromatic number of subcubic graph.
\begin{conjectures}\label{ourconj}
Let G be a subcubic graph. Then, 
\[ \chi_{\nu}(G) \leq 4\]
\end{conjectures}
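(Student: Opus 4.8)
The plan is to exhibit, for every subcubic $G$, one optimal acyclic edge colouring whose auxiliary graph $aux(G)$ is $4$-colourable; since a $3$-degenerate graph is always $4$-colourable, the concrete target is to prove that a suitably chosen $aux(G)$ is $3$-degenerate, i.e.\ that it contains no subgraph in which every vertex has $aux$-degree at least $4$. I would organise the proof around the value of $a'(G)$: by the quoted subcubic results, $a'(G)\le 4$ whenever $G$ is not $3$-regular, and $a'(G)\le 5$ when $G$ is $3$-regular. These two regimes behave very differently and I would handle them separately.

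First I would remove the vertices that can never block a degeneracy ordering. A degree-$1$ vertex of $G$ lies on at most $2$ maximal bichromatic paths (its single edge fixes one colour, and its subcubic neighbour admits at most two colours with which a nontrivial path can continue), so its $aux$-degree is at most $3$ and it immediately witnesses degeneracy wherever it appears. Moreover a single-edge maximal path joins two $G$-adjacent vertices and so never contributes a new $aux$-edge, a fact I would use repeatedly. The difficulty therefore lives on the $2$-core of $G$, where $G$-degrees are $2$ or $3$ and, by the degree bound already established, $aux$-degrees are at most $6$. The core step is to rule out a subgraph of $aux(G)$ with minimum $aux$-degree at least $4$, which I would attempt by the same rigidity argument used to exclude $K_7$: a vertex realising $aux$-degree $5$ or $6$ pins down the colours on all its incident maximal paths, and I would show that these forced patterns become mutually inconsistent once too many such vertices are adjacent, always leaving some vertex of $aux$-degree at most $3$. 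Whenever a given colouring fails to produce such a vertex, I would invoke the minimisation in the definition of $\chi_{\nu}$ and recolour a short bichromatic path to break an offending $aux$-adjacency without creating a new one.

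The genuinely hard regime is $G$ $3$-regular with $a'(G)=5$. Now a degree-$3$ vertex has three present and two absent colours and can be the endpoint of up to six maximal bichromatic paths, so its $aux$-degree may reach $9$; the crude bound is useless and $3$-degeneracy can fail for a careless colouring. Here the plan is to exploit the freedom over colourings completely: I would try to choose a $5$-colouring in which, at each vertex, the two absent colours are aligned with its neighbourhood so that several of the six candidate paths are single edges (contributing no new $aux$-edge) or share an endpoint, thereby collapsing the effective $aux$-degree back into the range treated in the previous paragraph.

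I expect the main obstacle to be exactly this simultaneity. Bounding one vertex's $aux$-degree, or excluding one dense local configuration, is routine; but a single optimal colouring must keep every vertex's bichromatic-path neighbourhood small at the same time, and a local recolouring that repairs one vertex can enlarge another's. The $3$-regular, $a'=5$ case concentrates all of this difficulty, and proving that an optimal colouring with globally small auxiliary degrees always exists is, I believe, the crux that currently keeps the statement at the level of a conjecture.
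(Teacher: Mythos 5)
There is no proof in the paper to compare against: the statement you are addressing is stated there explicitly as a conjecture, supported only by a computer search for counterexamples, and the authors say they are ``still working on it.'' Your submission is consistent with that status --- it is a research plan rather than a proof, and you say as much in your final paragraph --- but as a proof it has genuine gaps that I should name concretely. First, in the non-$3$-regular regime ($a'(G)\le 4$), the paper's own argument only achieves $\chi_{\nu}(G)\le 6$, by bounding the maximum degree of $aux(G)$ by $6$ and excluding $K_7$ via Brooks' theorem. Your target of $3$-degeneracy is far stronger: you must show that \emph{every} subgraph of $aux(G)$ contains a vertex of $aux$-degree at most $3$, i.e.\ exclude an infinite family of dense configurations, not just $K_7$. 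The ``rigidity argument'' you invoke is only demonstrated in the paper for the single most extreme configuration, and your fallback --- recolour a short bichromatic path to delete one offending $aux$-edge ``without creating a new one'' --- comes with no invariant or potential function guaranteeing that the repairs do not cascade or cycle. Second, and more seriously, in the $3$-regular case with $a'(G)=5$ the degree bound on $aux(G)$ jumps to $9$ (three present times two absent colours, plus three $G$-neighbours), and your entire strategy there is the aspiration that some colouring aligns the absent colours so that many candidate paths degenerate to single edges. You give no mechanism for constructing such a colouring, no local structure that forces one to exist, and you correctly identify this simultaneity as the crux. Until that crux is resolved, the argument does not close, which is precisely why the paper leaves the statement as a conjecture.

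One smaller caution: your claim that a degree-$1$ vertex has $aux$-degree at most $3$ is fine, but remember that the minimisation in the definition of $\chi_{\nu}$ is over \emph{optimal} acyclic edge colourings only; any recolouring step you perform must preserve the number of colours used, which further constrains the local repairs you propose.
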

\begin{theorem}
Let $G$ be any graph such that $a'(G)\le3$ or $a'(G)\ge6$. Let $H$ be any subcubic graph with $a'(H)\le4$. Then $a'(G\Box H)\le a'(G)+\Delta(H)$.
\end{theorem}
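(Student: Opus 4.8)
The plan is to feed the two hypotheses on $a'(G)$ into two different machines: the Lemma of Section~\ref{newchromatic} when $a'(G)$ is large, and the grid-like Theorem~\ref{thmgrid} when $a'(G)$ is small. First I would dispose of the case $gap(H)=0$: there $\Delta(H)=a'(H)$, so Theorem~\ref{cartprod} already gives $a'(G\Box H)\le a'(G)+a'(H)=a'(G)+\Delta(H)$. For a subcubic $H$ with $a'(H)\le 4$ the only remaining possibility is $gap(H)=1$, i.e.\ $(\Delta(H),a'(H))\in\{(2,3),(3,4)\}$; note $gap(H)\ge 2$ is impossible, since $\Delta(H)=2$ forces $H$ to be a path or a cycle with $a'(H)\le 3$. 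This is convenient, because the Lemma is established precisely for $gap(H)=1$, so its unproven large-gap regime never arises here.

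For the branch $a'(G)\ge 6$ I would invoke the Lemma directly. Its hypothesis is $a'(G)\ge\chi_{\nu}(H)$, and the preceding theorems give $\chi_{\nu}(H)\le 6$ for every subcubic $H$ with $a'(H)=4$ (the companion theorem rules out $aux(H)=K_7$ for non-regular $H$, while a $3$-regular $H$ cannot supply the seven vertices that $K_7$ demands). When instead $(\Delta(H),a'(H))=(2,3)$ the same degree count gives $\chi_{\nu}(H)\le 5$. Either way $a'(G)\ge 6\ge\chi_{\nu}(H)$, so the Lemma yields $a'(G\Box H)\le a'(G)+\Delta(H)$.

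The branch $a'(G)\le 3$ is where the real work lies, and I would handle it by a structural observation rather than by the Lemma (whose hypothesis $a'(G)\ge\chi_{\nu}(H)$ can now fail). Under the standing hypothesis that accompanies our product conjecture, in particular $gap(G)>0$, the inequalities $\Delta(G)\le a'(G)<a'(G)\cdot 1$ with $a'(G)>\Delta(G)$ and $a'(G)\le 3$ force $\Delta(G)\le 2$ and pin $G$ down to a single cycle $C_l$ with $a'(G)=3$ (paths and $K_2$ also satisfy $a'(G)\le 3$, but they have $gap=0$). Writing $G\Box H=H\Box C_l$ and applying Theorem~\ref{thmgrid}(3) with $\eta=a'(H)>2$ gives $a'(H\Box C_l)\le a'(H)+2$; since $gap(H)=1$ this equals $\Delta(H)+3=a'(C_l)+\Delta(H)=a'(G)+\Delta(H)$, as required. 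Here the assumption $gap(G)>0$ is genuinely needed: without it the product of a single edge with a triangle (for which $a'(K_2\Box C_3)>3$) already breaks the stated bound, so I would record $gap(G)>0$ explicitly rather than leave it implicit.

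The exclusion of $a'(G)\in\{4,5\}$ is exactly what makes the two branches exhaustive: for such $G$ one can neither guarantee $a'(G)\ge\chi_{\nu}(H)$ (as $\chi_{\nu}(H)$ may be $5$ or $6$) nor force the cyclic structure that lets the grid theorem save the one colour that Theorem~\ref{cartprod} wastes. I expect the main obstacle to be precisely this $a'(G)\le 3$ branch, namely making the reduction to cycles airtight and pinning down which degenerate configurations must be quarantined under $gap(G)>0$. The other delicate point is the threshold $\chi_{\nu}(H)\le 6$ on which the entire $a'(G)\ge 6$ branch rests, so I would re-verify the $K_7$-exclusion uniformly across the regular and non-regular subcubic cases before leaning on it.
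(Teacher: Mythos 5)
Your overall architecture is the paper's: dispose of $gap(H)=0$ via Theorem~\ref{cartprod}, handle $a'(G)\ge 6$ via the bound $\chi_{\nu}(H)\le 6$ fed into the lemma of Section~\ref{newchromatic}, and handle $a'(G)\le 3$ via the grid-like results. The genuine divergence is in the small branch, and your version is actually the sounder one. The paper splits $a'(G)\le 3$ into $\Delta(G)\le 2$ (citing \cite{gridlike}) and $\Delta(G)=a'(G)=3$ (citing Theorem~\ref{cartprod}); but when $a'(H)=4$ that second citation only yields $a'(G)+a'(H)=a'(G)+\Delta(H)+1$, one more than the claimed bound, so that subcase is not actually closed. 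You instead import the hypothesis $gap(G)>0$ from the paper's product conjecture, which together with $a'(G)\le 3$ forces $\Delta(G)\le 2$ and pins $G$ to a cycle, whereupon Theorem~\ref{thmgrid}(3) delivers exactly $a'(H)+2=\Delta(H)+3=a'(G)+\Delta(H)$. Your observation that the literal statement fails without $gap(G)>0$ is also correct: $K_2\Box C_3$ is cubic, so $a'(K_2\Box C_3)\ge 4>1+2$, and the theorem as printed omits this hypothesis. You further patch the $a'(G)\ge 6$ branch: the paper's $K_7$-exclusion theorem is stated only for non-regular subcubic graphs, and your parity remark (a cubic graph has an even number of vertices, so $aux(H)$ cannot be $K_7$) closes the regular case that the paper leaves dangling. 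What your route gives up is any attempt to cover $gap(G)=0$ in the small branch; what it buys is an argument that actually attains the stated bound there, plus an explicit record of the hypotheses under which the theorem is true.
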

If $a'(H)=\Delta(H)=3$ then the result is true due to Theorem~\ref{cartprod}. If $a'(H)=4$, then when $a'(G)\ge6$ we can use pairwise nonfixing permutations of some optimal colourings of $G$, which corresponds to the new vertex colouring being optimal. Since we know that $\chi_{\nu}(H)\le6$, there are enough nonfixing permutations of the colouring of $G$ when $a'(G)\ge6$. Thus the result is true for this case.
Now suppose $a'(G)\le3$. This implies that $\Delta(G)\le3$. If $\Delta(G)\le2$ then the theorem is true from the results of \cite{gridlike}. If $\Delta(G)=3$ and $a'(G)=3$ then the result follows from Theorem~\ref{cartprod}. This leaves unresolved only the cases when $a'(G)\in\{4,5\}$. This will follow if the conjecture~\ref{ourconj} is true. We are still working on it and have not found any counter examples with extensive computer aided search.   

\begin{figure}
    \centering
    \subfloat[$K_4$]{{\includegraphics[width=30mm]{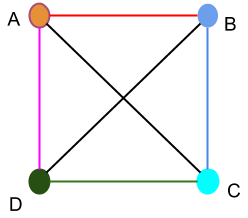} }}%
    \qquad
    \subfloat[Petersen graph]{{\includegraphics[width=50mm]{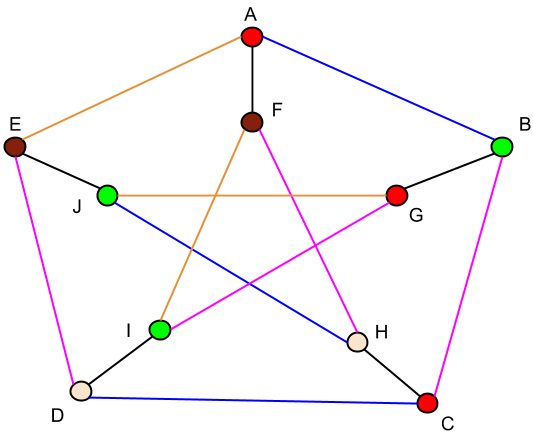} }}%
    \caption{optimal acyclic edge colouring along with optimal $\chi_{\nu}$}
    \label{fig:K4andpetersen}
\end{figure}

\section{Petersen Graph and $K_4$}\label{secspecgrap}
\begin{theorem}
Let $H\in\{$Petersen,$K_4\}$ and let $G$ be any connected graph. Then $a'(G \Box H)\le a'(G)+a'(H)-1$.
\end{theorem}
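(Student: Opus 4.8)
The plan is to start from the Muthu--Subramanian construction of Theorem~\ref{cartprod}, which already produces a valid acyclic edge colouring of $G\Box H$ with $a'(G)+a'(H)$ colours, and to shave off exactly one colour by reusing a single ``missing'' crossing colour, i.e. the clean $gap=1$ instance of the lemma of Section~\ref{newchromatic}. Concretely, I would view $G\Box H$ as copies of $G$ indexed by $V(H)$, colour the crossing edges by a fixed optimal acyclic colouring of $H$ (the one drawn in Fig.~\ref{fig:K4andpetersen}), and colour the copies of $G$ by mutually nonfixing permutations of one fixed optimal colouring of $G$, assigning the permutations according to a proper colouring of $aux(H)$. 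Since $\chi_{\nu}(K_4)=4$ (as $K_4$ is complete, $aux(K_4)=K_4$ irrespective of the colouring) and $\chi_{\nu}(\text{Petersen})$ is the small value realised in Fig.~\ref{fig:K4andpetersen}, this assignment goes through whenever $a'(G)\ge\chi_{\nu}(H)$.

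Granting that colouring, I would delete one colour of the $G$-palette globally. Because $gap(H)\ge1$ for both targets ($gap=1$ for Petersen, $gap=2$ for $K_4$), every $v\in V(H)$ has at least one $H$-colour absent among its incident crossing edges; recolouring, in the copy $G_v$, one designated $G$-colour class with such a missing $H$-colour removes that $G$-colour from $G_v$. Doing this uniformly over all copies deletes one colour overall, leaving $a'(G)+a'(H)-1$ colours. Properness is immediate since the reused colour is absent at $v$, so the step that needs care is acyclicity: I must verify that no recoloured edge closes a new bichromatic cycle. This is exactly the single-colour case the lemma flags as clear, and the permutation assignment dictated by $aux(H)$ is precisely what kills the two dangerous configurations, namely cycles between adjacent copies and cycles through endpoints of maximal bichromatic $H$-paths.

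It remains to treat $a'(G)<\chi_{\nu}(H)$, where $a'(G)$ is bounded by a constant and $G$ is therefore highly restricted (for $K_4$ this forces $a'(G)\le3$). Here I would interchange the factors, placing copies of $H$ indexed by $V(G)$: this is legitimate when $a'(H)\ge\chi_{\nu}(G)$ (which holds for such simple $G$, since $a'(H)$ is $5$ for $K_4$ and $4$ for Petersen), and one then reuses a missing $G$-colour provided $gap(G)\ge1$. The genuinely awkward residue is the family with $gap(G)=0$, for example paths, even cycles, and stars, where \emph{no} colour is missing in either direction; for these I would resort to direct arguments, reducing products with paths and cycles to Theorem~\ref{thmgrid} and handling the finitely many remaining small graphs by explicit constant-size colourings.

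The hard part will be twofold. First, the acyclicity check after recolouring is delicate for $H=K_4$: with $gap=2$ the bichromatic-path structure is dense, so I expect to need a deliberate choice of which $G$-class to overwrite and which missing colour to reuse, ensuring no recoloured edge completes a two-coloured cycle through the crossing edges. Second, the $gap(G)=0$ boundary cases defeat the uniform missing-colour technique completely; securing the sharp bound $a'(G)+a'(H)-1$ there, rather than the weaker value that Theorem~\ref{thmgrid} yields off the shelf, is the real crux of the argument.
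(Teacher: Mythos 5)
Your proposal follows essentially the same route as the paper: the paper likewise exhibits an explicit optimal acyclic edge colouring of $K_4$ and of the Petersen graph together with an optimal colouring of the auxiliary graph (Fig.~\ref{fig:K4andpetersen}), assigns nonfixing permutations of a colouring of $G$ according to that colouring of $aux(H)$, reuses one missing colour per copy, and declares the case $a'(G)\ge 4$ ``self-evident,'' relegating $a'(G)\le 3$ to special cases. The only divergence is in those special cases, where the paper cites Theorem~\ref{cartprod} (for $a'(G)=\Delta(G)=3$) and the results of \cite{gridlike} (for $\Delta(G)\le 2$) rather than swapping factors; since those citations yield only $a'(G)+a'(H)$ in several boundary instances, your identification of the $gap(G)=0$ residue as the genuine crux is, if anything, more candid than the paper's own treatment.
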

Here, we improve upon the results of the previous section, for the specific subcubic graphs Petersen and $K_4$ (referred to collectively as $H$). We use $G$ to refer to the other graph involved in the cartesian product. Since most of the ideas are similar to the previous section, we only briefly describe the peculiarities of these graphs which allows this improvement. Specifically, we simply show diagrams with optimal acyclic edge colourings of these graphs along with an optimal vertex colouring of the auxiliary graph we have defined. Thus the proof is self-evident (see Fig.5) for the cases where $a'(G)\ge4$. For the cases when $a'(G)=3$ and $\Delta(G)=3$ then the result follows from Theorem~\ref{cartprod}. If $\Delta(G)\le3$ then the result follows from \cite{gridlike}. This concludes the proof of the theorem.  

\section{Conclusions and future directions}\label{concl}
We have introduced a technique for rewriting one of the colour classes of each copy of $G$ with one  of the colours of $H$ missing at the vertex corresponding to that copy. This follows the phase of using nonfixing permutations for appropriate pairs of copies. The ideas in their current form don't work for $gap(H)=2$ (due to Alon's conjecture, we assume the gap is not larger) and we see scope for resolving this. The other problem is obtaining better estimates on our newly defined chromatic number for subcubic graphs as well as other graphs.
\bibliographystyle{plain} \bibliography{acyc}	
\end{document}